\documentclass[pdflatex,sn-mathphys-num,alpha]{sn-jnl}

\usepackage{graphicx}%
\usepackage{multirow}%
\usepackage{amsmath,amssymb,amsfonts}%
\usepackage{amsthm}%
\usepackage{mathrsfs}%
\usepackage[title]{appendix}%
\usepackage{xcolor}%
\usepackage{textcomp}%
\usepackage{manyfoot}%
\usepackage{booktabs}%
\usepackage{algorithm}%
\usepackage{algorithmicx}%
\usepackage{algpseudocode}%
\usepackage{listings}%

\usepackage{mathtools}
\mathtoolsset{showonlyrefs} 

\theoremstyle{thmstyleone}%
\newtheorem{theorem}{Theorem}
\newtheorem{lemma}[theorem]{Lemma}%
\newtheorem{corollary}[theorem]{Corollary}%

\theoremstyle{thmstyletwo}%
\newtheorem{example}{Example}%
\newtheorem{remark}{Remark}%

\theoremstyle{thmstylethree}%
\newtheorem{definition}{Definition}%

\newcommand{\bbR}{\mathbb{R}}
\newcommand{\bbC}{\mathbb{C}}
\newcommand{\bbP}{\mathbb{P}}
\newcommand{\rmd}{\mathrm{d}}
\newcommand{\trans}{\mathsf{T}}
\newcommand{\RK}{\mathrm{RK}}
\newcommand{\CSRK}{\mathrm{CSRK}}
\newcommand{\Nplus}{\mathbb{N}_{+}}
\newcommand{\MC}{\mathcal{M}_C}

\newcommand{\Tree}{\protect\scalebox{1.0}{%
\text{\hspace{0.5pt}\rm  T\hspace{0.5pt}}}}

\renewcommand{\Tree}{%
\text{\hspace{0.3pt}\rm  T\hspace{0.3pt}}}

\usepackage{tikz}
\usepackage{pgfplotstable}
\usetikzlibrary{arrows,positioning,plotmarks,external,patterns,angles,
decorations.pathmorphing,backgrounds,fit,shapes,calc}

\newcounter{ncount}
\newcounter{scount}

\def\metrics#1#2#3{%
\def\treesize{#1}\def\thick{#2}\def\rad{#3}}

\def\vertex#1#2{%
\addtocounter{ncount}{1}
\setcounter{scount}{\thencount}
\addtocounter{scount}{-#1}
\node (\thencount) at ($(\thescount) + ( #2,1)$){};
\draw (\thencount)--(\thescount);
\fill (\thencount) circle;
}

\def\treenv#1{%
\begin{tikzpicture}[x=\treesize mm,y=\treesize mm,radius=\rad pt,line width=\thick pt,inner sep=0,baseline=-0.02cm]
\node (0) at (0,0) {}; \fill (0) circle;
\setcounter{ncount}{0}
#1
\end{tikzpicture}}

\metrics{2.5}{1}{2}

\begin{document}

\title[Polynomial moment approach to a CSRK rank condition]{A Polynomial Moment Approach to a Rank Condition for Continuous-Stage Runge--Kutta Methods}

\author*[1]{\fnm{Yuto} \sur{Miyatake}}\email{yuto.miyatake.cmc@osaka-u.ac.jp}

\affil*[1]{\orgdiv{D3 Center}, \orgname{The University of Osaka}, \orgaddress{\country{Japan}}}

\abstract{
In the study of energy-preserving methods for Hamiltonian systems, polynomial continuous-stage Runge--Kutta methods play an important role.
Necessary and sufficient conditions for such methods to be energy-preserving have already been established.
They are energy-preserving if the matrix $M\in \bbR^{s\times s}$ defining the method is symmetric, and the converse holds under the assumption that a certain $s\times \infty$ matrix $\Phi^\CSRK$ has full row rank.  
It was conjectured in~\cite[Remark 3]{MiyatakeButcher2016} that the full-rank assumption should always hold for every consistent polynomial continuous-stage Runge--Kutta method.
In this paper, we prove the conjecture by showing that the matrix $\Phi^\CSRK$ has full row rank under the standard consistency condition.  
The proof is a direct application of the polynomial moment problem solved by Pakovich and Muzychuk~\cite{PakovichMuzychuk2009}.  }

\keywords{
    continuous-stage Runge--Kutta methods, 
    rank condition, polynomial moment problem, 
    energy-preserving methods
}


\maketitle

\section{Introduction}\label{sec1}

Continuous-stage Runge--Kutta (CSRK) methods are one-step numerical integrators for the initial value problem of ordinary differential equations
\begin{equation}\label{eq:ivp}
    \dot y=f(y),\qquad y(0)=y_0\in\bbR^N,
\end{equation}
where $f:\bbR^N\to\bbR^N$ is a sufficiently smooth vector field.
In particular, a polynomial CSRK method $y_0\mapsto y_1$ is defined as
\begin{align}
    Y_{\tau}
    &=y_0+h\int_0^1 A_{\tau,\zeta}f(Y_{\zeta})\,\rmd\zeta,
    \qquad 0\le \tau\le1, \label{eq:csrk-stage}\\
    y_1
    &=y_0+h\int_0^1 B_{\tau}f(Y_{\tau})\,\rmd\tau \, (=Y_1), \label{eq:csrk-step}
\end{align}
where $A_{\tau,\zeta}$ is a bivariate polynomial satisfying $A_{0,\zeta}=0$ and $B_{\tau}=A_{1,\tau}$ is a univariate polynomial.  
It is called an $s$-degree polynomial CSRK method if $A_{\tau,\zeta}$ is of degree at most $s$ in $\tau$.
The abscissa polynomial is defined by
\begin{equation}\label{eq:C-def}
    C(\tau)=C_{\tau}:=\int_0^1A_{\tau,\zeta}\,\rmd\zeta.
\end{equation}
Then $A_{0,\zeta}=0$ implies $C(0)=0$.
A method is said to be consistent if
\begin{equation*}
    \int_0^1B_{\tau}\,\rmd\tau=1,
\end{equation*}
or equivalently if
\begin{equation}\label{eq:C-endpoints}
    C(1)=1.
\end{equation}

CSRK methods provide a flexible framework for constructing numerical integrators for ordinary differential equations.
They go back to Butcher's algebraic theory of integration methods~\cite{Butcher1972}; see also Butcher's monograph on B-series~\cite{Butcher2021}.  
In particular, CSRK formulations have appeared in the study of energy-preserving integrators for Hamiltonian systems, including the average-vector-field collocation method and Hamiltonian boundary value methods~\cite{Hairer2010,BrugnanoIavernaroTrigiante2010}, continuous-stage energy-preserving methods~\cite{Miyatake2014,MiyatakeButcher2016}, and related Runge--Kutta type constructions~\cite{TangSun2012,TangSun2014}.

Both Runge--Kutta (RK) methods and CSRK methods are special cases of the more general class of B-series methods, and they share many properties. 
For example, one can apply the B-series theory to both.
In addition, the computational costs of $s$-stage implicit RK methods and $s$-degree polynomial CSRK methods are comparable.
Nevertheless, there are also important differences between the two classes.
From the viewpoint of geometric integration, RK methods can be symplectic for Hamiltonian systems, but they cannot preserve the energy of arbitrary Hamiltonian systems exactly.  
By contrast, CSRK methods can be energy-preserving for Hamiltonian systems, but they cannot preserve the symplectic structure of arbitrary Hamiltonian systems exactly.

It is well known that an $s$-stage RK method is symplectic if 
\begin{equation*}
    b_ia_{ij}+b_ja_{ji}=b_ib_j,
    \qquad i,j=1,\ldots,s,
\end{equation*}
where $a_{ij}$ and $b_i$ are the coefficients of the RK method~\cite{Lasagni1988,SanzSerna1988,Suris1988}.  
This condition is also necessary if the method is irreducible, i.e., if there are no equivalent stages (see~\cite{Hairer1994} and \cite[Section VI.7.3]{HairerLubichWanner2006}).  
The irreducibility condition is equivalent to the full row rank of a certain $s\times \infty$ matrix.

Let 
\begin{equation}
\label{eq:tree}
\Tree =\bigg\{ \raisebox{-6pt}{\treenv{},\,
    \treenv{\vertex10}, \, 
    \treenv{\vertex1{-0.6}\vertex2{0.6}}, \,
    \treenv{\vertex10\vertex10}, \,
    \treenv{\vertex1{-0.7}\vertex20\vertex3{0.7}}, \,
    \treenv{\vertex1{-0.6}\vertex2{0.6}\vertex10}, \,
    \treenv{\vertex10\vertex1{-0.6}\vertex2{0.6}}, \,
    \treenv{\vertex10\vertex10\vertex10},\, \dots} \bigg\}
\end{equation}
be the set of rooted trees.
We define $\varphi_{\RK,i}:\Tree\to\bbR$ as the elementary weight for 
\begin{equation}
\begin{array}{c|c}
    c & A \\
        \hline
        & e_i^\trans A
\end{array}.
\end{equation}
We define an $s\times \infty$ matrix $\Phi^{\RK}$ by
\begin{equation}
    \Phi^{\RK} = 
    \begin{bmatrix}
        \varphi^{\RK}_1(\treenv{}) & \varphi^{\RK}_1(\treenv{\vertex10}) & \varphi^{\RK}_1(\treenv{\vertex1{-0.6}\vertex2{0.6}}) & \cdots \\
        \varphi^{\RK}_2(\treenv{}) & \varphi^{\RK}_2(\treenv{\vertex10}) & \varphi^{\RK}_2(\treenv{\vertex1{-0.6}\vertex2{0.6}}) & \cdots \\
        \vdots & \vdots & \ddots & \ddots \\
        \varphi^{\RK}_s(\treenv{}) & \varphi^{\RK}_s(\treenv{\vertex10}) & \varphi^{\RK}_s(\treenv{\vertex1{-0.6}\vertex2{0.6}}) & \cdots
    \end{bmatrix}.
\end{equation}
Then, an RK method is irreducible if and only if $\Phi^{\RK}$ has full rank $s$ (see~\cite[Section VI.7.3]{HairerLubichWanner2006}).

Interestingly, a similar rank condition appears in the characterization of energy-preserving polynomial CSRK methods~\cite{MiyatakeButcher2016}. 
We represent the kernel as
\begin{equation}\label{eq:A-M}
    A_{\tau,\zeta}
    =
    \begin{bmatrix}
        \tau & \tau^2/2 & \cdots & \tau^s/s
    \end{bmatrix}
    M
    \begin{bmatrix}
        1 & \zeta & \cdots & \zeta^{s-1}
    \end{bmatrix}^\trans,
\end{equation}
where $M\in\bbR^{s\times s}$.  
Then, it is straightforward to verify that the method is energy-preserving if $M$ is symmetric.
Conversely, this condition is also necessary if the method is consistent and the matrix $\Phi^{\CSRK}$ defined below has full row rank $s$:
\begin{equation}\label{eq:PhiCSRK}
    \Phi_{i,k}^{\CSRK}
    =\int_0^1 \tau^{i-1}C_\tau^k\,\rmd\tau,
    \qquad
    i=1,\ldots,s,
    \quad k=1,2,\ldots .
\end{equation}
It was conjectured in~\cite[Remark 3]{MiyatakeButcher2016} that this assumption should always hold for every consistent polynomial CSRK method.

In this paper, we prove in Section~\ref{sec:pmp} that the moment matrix~\eqref{eq:PhiCSRK} has full row rank $s$ under the standard consistency condition~\eqref{eq:C-endpoints}.
We also note that the rank condition is not directly related to the reducibility of continuous-stage methods.
We discuss this distinction in more detail in Section~\ref{sec:pointwise}.

\paragraph{Notation.}
For $m\geq 0$, $\bbP_m$ denotes the real vector space of polynomials of degree at most $m$.
We also write $\Nplus=\{1,2,\ldots\}$.

\section{Polynomial moment problem and proof of the rank condition}\label{sec:pmp}

\subsection{The moment map and the rank condition}

We formulate this rank condition as a moment non-degeneracy statement.  
For a real polynomial $C$, we define
\begin{equation}\label{eq:moment-map}
    \MC:\bbP_{s-1}\to \bbR^{\Nplus},
    \qquad
    q\mapsto
    \left(
        \int_0^1 q(\tau)C(\tau)^k\,\rmd\tau
    \right)_{k\ge1}.
\end{equation}

As Lemma~\ref{lem:moment-rank} below shows,
the full rank condition for the matrix $\Phi^\CSRK$ in~\eqref{eq:PhiCSRK} is equivalent to the injectivity of the map $\MC$. 
Note that if the map~\eqref{eq:moment-map} is not injective, there exists a non-zero polynomial $q\in\bbP_{s-1}$ such that
\begin{equation}\label{eq:moment-null}
    \int_0^1 q(\tau)C(\tau)^k\,\rmd\tau=0,
    \qquad k=1,2,\ldots .
\end{equation}

\begin{lemma}\label{lem:moment-rank}
Let $C$ be a real polynomial, and define $\Phi^\CSRK$ by~\eqref{eq:PhiCSRK}.  The following are equivalent:
\begin{enumerate}
    \item[(i)] the moment map $\MC:\bbP_{s-1}\to\bbR^{\Nplus}$ is injective;
    \item[(ii)] the matrix $\Phi^\CSRK$ has full row rank $s$.
\end{enumerate}
\end{lemma}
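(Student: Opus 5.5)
The plan is to identify $\MC$ with the linear map given by the transpose of $\Phi^\CSRK$, acting on coefficient vectors. Once that is done, the equivalence is the statement that a linear map and its matrix have the same kernel.

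Write $q\in\bbP_{s-1}$ in the monomial basis as $q(\tau)=\sum_{i=1}^s a_i\tau^{i-1}$ with coefficient vector $a=(a_1,\ldots,a_s)^\trans\in\bbR^s$. The map $a\mapsto q$ is a linear isomorphism $\bbR^s\to\bbP_{s-1}$. By linearity of the integral, for every $k\ge1$,
\begin{equation*}
    \big(\MC q\big)_k=\int_0^1 q(\tau)C(\tau)^k\,\rmd\tau=\sum_{i=1}^s a_i\int_0^1\tau^{i-1}C_\tau^k\,\rmd\tau=\sum_{i=1}^s a_i\Phi^\CSRK_{i,k}.
\end{equation*}
So $\MC q$ is exactly the row vector $a^\trans\Phi^\CSRK\in\bbR^{\Nplus}$. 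Each entry is a finite sum, so there are no convergence issues even though $\Phi^\CSRK$ has infinitely many columns.

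Hence $\MC$ is injective if and only if the only $a\in\bbR^s$ with $a^\trans\Phi^\CSRK=0$ is $a=0$. This says the $s$ rows of $\Phi^\CSRK$ are linearly independent.

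For an $s\times\infty$ matrix, linear independence of the rows is what full row rank $s$ means. The one point needing care is the infinite column count. If the rows are independent, then some finite set of columns already gives an $s\times K$ submatrix of rank $s$. To see this, consider the increasing chain of left kernels of the truncations $[\Phi^\CSRK_{i,k}]_{k\le K}$. These are decreasing subspaces of $\bbR^s$, so they stabilise at some finite $K$. Their intersection over all $K$ is $\{0\}$ by independence, so the stabilised kernel is $\{0\}$ and the truncation has rank $s$. Conversely, rank $s$ of any truncation forces independence of the full rows. I expect no real obstacle; this finite-truncation remark is the only step that is not purely mechanical.
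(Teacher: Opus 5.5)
Your proposal is correct and is essentially the paper's argument: both identify $q=\sum_i \alpha_i\tau^{i-1}$ with its coefficient vector and observe that $\MC q = \alpha^\trans\Phi^\CSRK$, so injectivity of $\MC$ is exactly triviality of the left kernel of $\Phi^\CSRK$. Your finite-truncation remark is a harmless extra that the paper does not need, since the paper takes full row rank to mean that no nonzero $\alpha$ satisfies $\alpha^\trans\Phi^\CSRK=0$; note that you call it an ``increasing chain'' when you mean the decreasing chain of left kernels.
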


\begin{proof}
If $\Phi^\CSRK$ does not have full row rank, there exists $0\ne\alpha=(\alpha_1,\ldots,\alpha_s)^{\trans}\in\bbR^s$ such that $\alpha^{\trans}\Phi^\CSRK=0$.  Set
\begin{equation}
    q(\tau)=\sum_{i=1}^s\alpha_i\tau^{i-1}.
\end{equation}
Then $0\ne q\in\bbP_{s-1}$ and, for all $k\ge1$,
\begin{equation}
    \int_0^1q(\tau)C(\tau)^k\,\rmd\tau
    =\sum_{i=1}^s\alpha_i\int_0^1\tau^{i-1}C(\tau)^k\,\rmd\tau
    =(\alpha^{\trans}\Phi^\CSRK)_k=0.
\end{equation}
Thus $\MC$ is not injective.  Conversely, if $\MC$ is not injective, take $0\ne q\in\bbP_{s-1}$ satisfying~\eqref{eq:moment-null} and write $q(\tau)=\sum_{i=1}^s\alpha_i\tau^{i-1}$.  Then $\alpha\ne0$ and $\alpha^{\trans}\Phi=0$, so $\Phi$ cannot have full row rank.
\end{proof}

\subsection{Full rank property}

The main claim of the paper is the following.

\begin{theorem}\label{thm:auto-nd}
Let $C$ be a real polynomial satisfying
\begin{equation}
    C(0)=0,
    \qquad
    C(1)=1.
\end{equation}
Then, for every $s\ge1$, the moment map $\MC:\bbP_{s-1}\to\bbR^{\Nplus}$ is injective.  Consequently, the matrix $\Phi^\CSRK$ in~\eqref{eq:PhiCSRK} has full row rank $s$ for every consistent polynomial CSRK method. 
\end{theorem}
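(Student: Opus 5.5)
The plan is to reduce the vanishing of all moments in~\eqref{eq:moment-null} to the polynomial moment problem solved by Pakovich and Muzychuk~\cite{PakovichMuzychuk2009}, and then to show that the structure their theorem forces is incompatible with $C(0)\neq C(1)$. By Lemma~\ref{lem:moment-rank}, it suffices to prove that $\MC$ is injective. Suppose not, and take $0\neq q\in\bbP_{s-1}$ satisfying~\eqref{eq:moment-null}.

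The Pakovich--Muzychuk theorem is stated for moments $\int_a^b P^k\,\rmd Q$ with $k\ge0$. Condition~\eqref{eq:moment-null} only covers $k\ge1$, so I would first shift the index. Define
\begin{equation}
    \widehat Q(\tau):=\int_0^\tau q(\sigma)C(\sigma)\,\rmd\sigma .
\end{equation}
Then~\eqref{eq:moment-null} becomes
\begin{equation}
    \int_0^1 C(\tau)^j\,\rmd\widehat Q(\tau)=0,\qquad j=0,1,2,\ldots .
\end{equation}
The polynomial $C$ is nonconstant because $C(0)=0\neq1=C(1)$. Hence $qC\not\equiv0$, and so $\widehat Q$ is a nonconstant polynomial.

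Next I apply the theorem with $P=C$, $Q=\widehat Q$, $a=0$, $b=1$, viewing them as complex polynomials if needed. It yields a decomposition
\begin{equation}
    \widehat Q=\sum_{j}\widehat Q_j,\qquad C=\widetilde P_j\circ W_j,\qquad \widehat Q_j=\widetilde Q_j\circ W_j,\qquad W_j(0)=W_j(1).
\end{equation}
Summands that are constant can be discarded, absorbing them into an additive constant. Since $\widehat Q$ is nonconstant, at least one index $j$ remains with $\widehat Q_j$ nonconstant. For that $j$ we get
\begin{equation}
    C(0)=\widetilde P_j(W_j(0))=\widetilde P_j(W_j(1))=C(1),
\end{equation}
which contradicts $C(0)=0$ and $C(1)=1$. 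Hence $\MC$ is injective, and Lemma~\ref{lem:moment-rank} gives full row rank $s$ of $\Phi^\CSRK$ for every consistent method, since consistency is exactly~\eqref{eq:C-endpoints} and $C(0)=0$ always holds.

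The main obstacle is matching the precise hypotheses and form of the Pakovich--Muzychuk theorem. In particular I need to confirm three things: it applies to arbitrary complex (hence real) polynomials $P,Q$; it handles the case where $Q$ is determined only up to an additive constant; and it genuinely guarantees a common right factor $W_j$ of $C$ and $\widehat Q_j$ with $W_j(0)=W_j(1)$ for every nonconstant summand. The index-shift trick via $\widehat Q$ is what lets the $k\ge1$ conditions fit the $k\ge0$ formulation. Apart from this, the argument is routine.
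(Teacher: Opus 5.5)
Your proposal is correct and is essentially the paper's proof. Your $\widehat Q$ is an antiderivative of the paper's $\widetilde q = Cq$, so your index shift is the same multiplication-by-$C$ device, and your unpacking of the Pakovich--Muzychuk decomposition (a nonempty sum of terms with $C=\widetilde P_j\circ W_j$ and $W_j(0)=W_j(1)$, which forces $C(0)=C(1)$) is exactly how the paper justifies its Theorem~\ref{thm:PM}; the three points you flag all hold in that theorem, since it concerns complex polynomials, $Q$ enters only through $\rmd Q$, and every summand has $W_j(a)=W_j(b)$.
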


Theorem~\ref{thm:auto-nd} can be seen as a direct application of the solution of the polynomial moment problem by Pakovich and Muzychuk~\cite{PakovichMuzychuk2009}. 

\begin{theorem}[Pakovich--Muzychuk~\cite{PakovichMuzychuk2009}]\label{thm:PM}
Let $P$ be a complex polynomial, and let $a,b\in\bbC$ with $a\ne b$.
There exists a nonzero complex polynomial $q$ satisfying 
\begin{equation}\label{eq:pmp}
    \int_a^b P(z)^k q(z)\,\rmd z=0,
    \qquad k=0,1,2,\ldots,
\end{equation}
if and only if $P(a)=P(b)$.
\end{theorem}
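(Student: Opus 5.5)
The statement has two directions of very different difficulty, and I would treat them separately. For the ``if'' direction, assume $P(a)=P(b)$. If $P$ is constant, the hypothesis is automatic, and any nonzero polynomial $q$ with $\int_a^b q\,\rmd z=0$ works, for instance $q(z)=z-(a+b)/2$. If $P$ is nonconstant, I take $q=P'\neq0$. Then for every $k\ge0$,
\begin{equation}
    \int_a^b P(z)^kP'(z)\,\rmd z=\frac{P(b)^{k+1}-P(a)^{k+1}}{k+1}=0,
\end{equation}
so~\eqref{eq:pmp} holds. This direction is elementary.

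For the ``only if'' direction, I would argue by contraposition. Assume $P(a)\ne P(b)$, so in particular $P$ is nonconstant, and suppose $q\ne0$ satisfies~\eqref{eq:pmp}. The first step is to encode all moments in the Cauchy-type generating function
\begin{equation}
    F(t)=\int_a^b\frac{q(z)}{t-P(z)}\,\rmd z=\sum_{k\ge0}\frac{1}{t^{k+1}}\int_a^bP(z)^kq(z)\,\rmd z ,
\end{equation}
which vanishes identically near $t=\infty$. Writing $Q$ for the primitive of $q$ with $Q(a)=0$, the case $k=0$ gives $Q(b)=0$.

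Next I change variables $w=P(z)$ along the path. This expresses $F$ as a combination of Cauchy integrals of the branches $Q(P_i^{-1}(w))$, where $P_i^{-1}$ runs over the $n=\deg P$ branches of the inverse of $P$. The vanishing of $F$ near infinity, continued analytically, then turns into a linear relation
\begin{equation}
    \sum_i c_i\,Q\bigl(P_i^{-1}(w)\bigr)\equiv0 ,
\end{equation}
with coefficients $c_i\in\{0,\pm1\}$ determined by which branches the path from $a$ to $b$ connects. The assumption $P(a)\ne P(b)$ forces the singular endpoints $P(a)$ and $P(b)$ to be distinct. The coefficient vector is therefore not trivially balanced, in the sense that it is not of the form obtained when $a$ and $b$ lie on the same fibre.

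The main obstacle is the final step: extracting a contradiction from this relation. Here I would follow Pakovich--Muzychuk and use the monodromy group $G\subset S_n$ of $P$, which contains an $n$-cycle coming from the loop around infinity. The function $Q\circ P^{-1}$ spans a $G$-invariant subspace of $\bbC^n$, namely the branches modulo the relation above. Muzychuk's result on invariant subspaces for permutation groups containing a full cycle shows that such subspaces are spanned by vectors arising from imprimitivity systems. Translated back, this yields a decomposition
\begin{equation}
    Q=\sum_j \tilde Q_j\circ W_j,\qquad P=\tilde P_j\circ W_j,\qquad W_j(a)=W_j(b).
\end{equation}
Since $q\ne0$, the polynomial $Q$ is nonconstant, so at least one summand is present. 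For that index $j$ we get $P(a)=\tilde P_j(W_j(a))=\tilde P_j(W_j(b))=P(b)$, which is a contradiction. The group-theoretic invariant-subspace lemma is the genuinely deep ingredient. I do not expect to reprove it, and in the paper I would simply invoke it from~\cite{PakovichMuzychuk2009}.
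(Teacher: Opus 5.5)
Your proposal is correct and follows the paper's argument. The ``if'' direction exhibits a function of $P$ (your $q=P'$). The ``only if'' direction invokes the Pakovich--Muzychuk decomposition $Q=\sum_j\tilde Q_j\circ W_j$ with $P=\tilde P_j\circ W_j$ and $W_j(a)=W_j(b)$: a nonzero $q$ forces at least one summand, hence $P(a)=P(b)$. Your Cauchy-integral and monodromy sketch only outlines how that cited theorem is proved.

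Your separate treatment of constant $P$ also covers a case that the paper's remark ``taking functions of $P$'' glosses over. There $P'=0$, and one needs something like $q(z)=z-(a+b)/2$.
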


Theorem~\ref{thm:PM} is a direct consequence of the solution of the polynomial moment problem in~\cite{PakovichMuzychuk2009}.  
More precisely, the main theorem therein expresses every nonzero solution as a sum of reducible solutions associated with compositional right factors $W_j$ satisfying $W_j(a)=W_j(b)$; this forces $P(a)=P(b)$, and the converse is obtained by taking functions of $P$.

\begin{proof}[Proof of Theorem~\ref{thm:auto-nd}]
Suppose that $\MC$ is not injective.  Then there exists $0\ne q\in\bbP_{s-1}$ such that
\begin{equation}\label{eq:q-null-positive}
    \int_0^1 q(\tau)C(\tau)^k\,\rmd\tau=0,
    \qquad k=1,2,\ldots .
\end{equation}
Set
\[
    \widetilde q(\tau)=C(\tau)q(\tau).
\]
Since $C(1)=1$, the polynomial $C$ is not identically zero; hence $\widetilde q$ is non-zero.  Moreover,~\eqref{eq:q-null-positive} gives, for every $m=0,1,2,\ldots$,
\[
    \int_0^1 C(\tau)^m\widetilde q(\tau)\,\rmd\tau
    =\int_0^1q(\tau)C(\tau)^{m+1}\,\rmd\tau
    =0.
\]
Thus $\widetilde q$ is a non-zero solution of the polynomial moment problem~\eqref{eq:pmp} with $P=C$, $a=0$, and $b=1$.  By Theorem~\ref{thm:PM}, this implies $C(0)=C(1)$, contradicting $C(0)=0$ and $C(1)=1$.  Therefore $\MC$ is injective.  The full row rank of $\Phi$ follows from Lemma~\ref{lem:moment-rank}.
\end{proof}

\begin{remark}\label{rem:k-positive}
The moment map~\eqref{eq:moment-map} uses only the moments with $k\ge1$, whereas Theorem~\ref{thm:PM} is stated for $k\ge0$.  The multiplication by $C$ in the proof is the simple device that converts the vanishing of positive moments for $q$ into the vanishing of all moments for $Cq$.
\end{remark}

\subsection{Consequence for energy-preserving methods}\label{sec:energy}

We recall the characterization of energy-preserving polynomial CSRK methods from~\cite{MiyatakeButcher2016}.  Consider Hamiltonian systems
\begin{equation}\label{eq:hamiltonian}
    \dot y=S\nabla H(y),
\end{equation}
where $S$ is a constant non-singular skew-symmetric matrix.  
Combining Theorem 3.5 of~\cite{MiyatakeButcher2016}, summarized in Section~\ref{sec1}, with Theorem~\ref{thm:auto-nd} yields the following characterization of energy-preservation for consistent polynomial CSRK methods.

\begin{corollary}\label{cor:energy}
Let a consistent polynomial CSRK method have the representation~\eqref{eq:A-M}.  Then the method is energy-preserving for all Hamiltonian systems of the form~\eqref{eq:hamiltonian} if and only if
\begin{equation}
    M=M^{\trans}.
\end{equation}
\end{corollary}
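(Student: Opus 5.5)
The corollary should follow by combining the characterization of \cite{MiyatakeButcher2016} (their Theorem 3.5, recalled in Section~\ref{sec1}) with Theorem~\ref{thm:auto-nd}. That characterization has two parts. First, if $M$ is symmetric, the method is energy-preserving. Second, if the method is consistent, energy-preserving, and $\Phi^\CSRK$ has full row rank $s$, then $M=M^\trans$. The only task is to check that the hypotheses of the converse direction are automatically met for a consistent polynomial CSRK method.

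\emph{Sufficiency.} Assume $M=M^\trans$. I would quote the sufficiency part of the cited theorem directly; it needs no rank assumption. As a sanity check, one could recall the mechanism. Write $f=S\nabla H$. Then
\[
H(y_1)-H(y_0)=\int_0^1 \nabla H(Y_\tau)^\trans \dot Y_\tau\,\rmd\tau ,
\]
and $\dot Y_\tau$ is obtained by differentiating~\eqref{eq:csrk-stage} in $\tau$. Its kernel $\partial_\tau A_{\tau,\zeta}=[1,\tau,\dots,\tau^{s-1}]M[1,\zeta,\dots,\zeta^{s-1}]^\trans$ is symmetric in $(\tau,\zeta)$ exactly when $M$ is symmetric. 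Set $g_\tau=\nabla H(Y_\tau)$. Then $H(y_1)-H(y_0)$ becomes a double integral of $g_\tau^\trans S g_\zeta$ against a symmetric kernel. Since $S$ is skew, $g_\tau^\trans S g_\zeta=-g_\zeta^\trans S g_\tau$, so exchanging $\tau$ and $\zeta$ shows the double integral equals its own negative and therefore vanishes.

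\emph{Necessity.} Assume the method is consistent and energy-preserving for all Hamiltonian systems~\eqref{eq:hamiltonian}. By~\eqref{eq:C-def} and $A_{0,\zeta}=0$, the abscissa polynomial satisfies $C(0)=0$. Consistency~\eqref{eq:C-endpoints} gives $C(1)=1$. Theorem~\ref{thm:auto-nd} then yields that $\Phi^\CSRK$ in~\eqref{eq:PhiCSRK} has full row rank $s$. So all hypotheses of the converse part of the cited theorem hold, and it gives $M=M^\trans$.

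The only point needing care is bookkeeping rather than mathematics. One must make sure that the $\Phi^\CSRK$ in the cited theorem is exactly~\eqref{eq:PhiCSRK}, built from the same monomial basis $\tau^{i-1}$ and the same abscissa $C$. One must also confirm that the representation~\eqref{eq:A-M} with the weights $\tau^j/j$ is the one used there. Since the introduction states the equivalence in precisely these terms, I expect this to be immediate, and the corollary to require no new argument beyond Theorem~\ref{thm:auto-nd}.
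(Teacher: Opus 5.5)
Your proposal is correct and matches the paper's argument. The paper obtains the corollary by combining Theorem~3.5 of \cite{MiyatakeButcher2016} with Theorem~\ref{thm:auto-nd}: $C(0)=0$ and consistency $C(1)=1$ give the full row rank of $\Phi^\CSRK$, which removes the rank assumption from the necessity direction. Your sketch of sufficiency, using symmetry of the kernel $\partial_\tau A_{\tau,\zeta}$ and skew-symmetry of $S$, is a correct extra check that the paper does not spell out.
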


\section{Relation with pointwise reducibility}\label{sec:pointwise}

The rank condition studied above is reminiscent of the full-rank characterizations that appear in finite-stage RK theory.  
However, for continuous-stage methods the direct analogue of stage reducibility is pointwise reducibility.  
It is a condition on the equality of stage values, not on the moment matrix~\eqref{eq:PhiCSRK}.

\begin{definition}[Pointwise equivalence and pointwise reducibility]\label{def:pointwise}
Two parameters $\tau_1,\tau_2\in[0,1]$ are called pointwise equivalent if, for every sufficiently smooth initial value problem~\eqref{eq:ivp} and every sufficiently small step size for which the stage equation~\eqref{eq:csrk-stage} is uniquely solvable, the corresponding stage curve satisfies
\begin{equation}
    Y_{\tau_1}=Y_{\tau_2}.
\end{equation}
The method is called pointwise reducible if there exist $\tau_1\ne\tau_2$ that are pointwise equivalent, and pointwise irreducible otherwise.
\end{definition}

Butcher discusses the concepts of equivalence and reducibility for more general integration methods in~\cite[Section 4.4]{Butcher2021}.  
He defines $A$-equivalence, and the pointwise equivalence introduced above is a special case of $A$-equivalence for polynomial CSRK methods.

Pointwise irreducibility is a natural continuous-stage analogue of irreducibility for finite-stage RK methods.  
By contrast, 
the moment non-degeneracy tests whether a non-zero polynomial stage direction can be invisible to all powers of the single abscissa polynomial $C$.  
The pointwise reducibility depends on the full kernel $A_{\tau,\zeta}$, whereas the moment non-degeneracy depends only on $C$.

The following example shows that, although every consistent polynomial CSRK method is moment nondegenerate, such a method may still be pointwise reducible.

\begin{example}[Pointwise reducible while $\Phi^\CSRK$ is full rank]\label{ex:reducible-rho}
Let
\begin{equation}\label{eq:rho-def}
    \rho(\tau)=2\tau^2-\tau
\end{equation}
and define
\begin{equation}\label{eq:A-rho-example}
    A_{\tau,\zeta}
    =\rho(\tau)+\rho(\tau)^2\left(\zeta-\frac12\right).
\end{equation}
Then $A_{0,\zeta}=0$ and
\begin{equation}
    B_{\zeta}=A_{1,\zeta}=\zeta+\frac12,
    \qquad
    \int_0^1B_{\zeta}\,\rmd\zeta=1.
\end{equation}
Thus the method is consistent.  Its abscissa polynomial is
\begin{equation}
    C(\tau)=\int_0^1A_{\tau,\zeta}\,\rmd\zeta=\rho(\tau).
\end{equation} 
For this example, it follows that $\rho(0)=\rho(1/2)=0$, and hence $Y_0=Y_{1/2}=y_0$ for every problem.  
This indicates that the method is pointwise reducible.
\end{example}

Example~\ref{ex:reducible-rho} illustrates why pointwise reducibility and the full-rank property of $\Phi^\CSRK$ should be kept conceptually separate.  
The former identifies redundant stage parameters.
The latter excludes invisible polynomial directions in the moment matrix $\Phi^\CSRK$~\eqref{eq:PhiCSRK}.  
These are different tests, and in the consistent polynomial setting Theorem~\ref{thm:auto-nd} implies that the matrix $\Phi^\CSRK$ is always full rank.

\section{Concluding remarks}\label{sec:conclusion}

We proved that the moment matrix appearing in the characterization of energy-preserving polynomial CSRK methods has full row rank under the standard consistency conditions.  Equivalently, the associated moment map is injective on the polynomial stage-direction space.  
The proof is a direct application of the solution to the polynomial moment problem obtained by Pakovich and Muzychuk.
 Consequently, the full-rank conjecture in~\cite[Remark 3]{MiyatakeButcher2016} follows, and the rank assumption in the necessity part of the energy-preserving condition can be removed.

We also clarified that this moment non-degeneracy is not the same as pointwise irreducibility.  Pointwise reducibility concerns equality of stage values and depends on the full kernel $A_{\tau,\zeta}$.  The rank condition treated in this paper concerns only the abscissa polynomial $C$ and the finite-dimensional polynomial stage-direction space.

\bmhead{Acknowledgements}

The author thanks Norio Nawata for helpful discussions that led to the identification of the connection with the polynomial moment problem and the relevant result in~\cite{PakovichMuzychuk2009}.
The work was supported by JSPS KAKENHI Grant Numbers 21K18301, 24K02951, 24K00540 and 25H00449.

\bibliography{sn-bibliography}


\begin{thebibliography}{14}
\ifx \bisbn   \undefined \def \bisbn  #1{ISBN #1}\fi
\ifx \binits  \undefined \def \binits#1{#1}\fi
\ifx \bauthor  \undefined \def \bauthor#1{#1}\fi
\ifx \batitle  \undefined \def \batitle#1{#1}\fi
\ifx \bjtitle  \undefined \def \bjtitle#1{#1}\fi
\ifx \bvolume  \undefined \def \bvolume#1{\textbf{#1}}\fi
\ifx \byear  \undefined \def \byear#1{#1}\fi
\ifx \bissue  \undefined \def \bissue#1{#1}\fi
\ifx \bfpage  \undefined \def \bfpage#1{#1}\fi
\ifx \blpage  \undefined \def \blpage #1{#1}\fi
\ifx \burl  \undefined \def \burl#1{\textsf{#1}}\fi
\ifx \doiurl  \undefined \def \doiurl#1{\url{https://doi.org/#1}}\fi
\ifx \betal  \undefined \def \betal{\textit{et al.}}\fi
\ifx \binstitute  \undefined \def \binstitute#1{#1}\fi
\ifx \binstitutionaled  \undefined \def \binstitutionaled#1{#1}\fi
\ifx \bctitle  \undefined \def \bctitle#1{#1}\fi
\ifx \beditor  \undefined \def \beditor#1{#1}\fi
\ifx \bpublisher  \undefined \def \bpublisher#1{#1}\fi
\ifx \bbtitle  \undefined \def \bbtitle#1{#1}\fi
\ifx \bedition  \undefined \def \bedition#1{#1}\fi
\ifx \bseriesno  \undefined \def \bseriesno#1{#1}\fi
\ifx \blocation  \undefined \def \blocation#1{#1}\fi
\ifx \bsertitle  \undefined \def \bsertitle#1{#1}\fi
\ifx \bsnm \undefined \def \bsnm#1{#1}\fi
\ifx \bsuffix \undefined \def \bsuffix#1{#1}\fi
\ifx \bparticle \undefined \def \bparticle#1{#1}\fi
\ifx \barticle \undefined \def \barticle#1{#1}\fi
\bibcommenthead
\ifx \bconfdate \undefined \def \bconfdate #1{#1}\fi
\ifx \botherref \undefined \def \botherref #1{#1}\fi
\ifx \url \undefined \def \url#1{\textsf{#1}}\fi
\ifx \bchapter \undefined \def \bchapter#1{#1}\fi
\ifx \bbook \undefined \def \bbook#1{#1}\fi
\ifx \bcomment \undefined \def \bcomment#1{#1}\fi
\ifx \oauthor \undefined \def \oauthor#1{#1}\fi
\ifx \citeauthoryear \undefined \def \citeauthoryear#1{#1}\fi
\ifx \endbibitem  \undefined \def \endbibitem {}\fi
\ifx \bconflocation  \undefined \def \bconflocation#1{#1}\fi
\ifx \arxivurl  \undefined \def \arxivurl#1{\textsf{#1}}\fi
\csname PreBibitemsHook\endcsname

\bibitem[\protect\citeauthoryear{Miyatake and Butcher}{2016}]{MiyatakeButcher2016}
\begin{barticle}
\bauthor{\bsnm{Miyatake}, \binits{Y.}},
\bauthor{\bsnm{Butcher}, \binits{J.C.}}:
\batitle{A characterization of energy-preserving methods and the construction of parallel integrators for {H}amiltonian systems}.
\bjtitle{SIAM J. Numer. Anal.}
\bvolume{54}(\bissue{3}),
\bfpage{1993}--\blpage{2013}
(\byear{2016})
\doiurl{10.1137/15M1020861}
\end{barticle}
\endbibitem

\bibitem[\protect\citeauthoryear{Pakovich and Muzychuk}{2009}]{PakovichMuzychuk2009}
\begin{barticle}
\bauthor{\bsnm{Pakovich}, \binits{F.}},
\bauthor{\bsnm{Muzychuk}, \binits{M.}}:
\batitle{Solution of the polynomial moment problem}.
\bjtitle{Proc. Lond. Math. Soc. (3)}
\bvolume{99}(\bissue{3}),
\bfpage{633}--\blpage{657}
(\byear{2009})
\doiurl{10.1112/plms/pdp010}
\end{barticle}
\endbibitem

\bibitem[\protect\citeauthoryear{Butcher}{1972}]{Butcher1972}
\begin{barticle}
\bauthor{\bsnm{Butcher}, \binits{J.C.}}:
\batitle{An algebraic theory of integration methods}.
\bjtitle{Math. Comp.}
\bvolume{26},
\bfpage{79}--\blpage{106}
(\byear{1972})
\doiurl{10.2307/2004720}
\end{barticle}
\endbibitem

\bibitem[\protect\citeauthoryear{Butcher}{2021}]{Butcher2021}
\begin{bbook}
\bauthor{\bsnm{Butcher}, \binits{J.C.}}:
\bbtitle{B-series---{A}lgebraic Analysis of Numerical Methods}.
\bsertitle{Springer Series in Computational Mathematics},
vol. \bseriesno{55},
p. \bfpage{310}.
\bpublisher{Springer}, 
(\byear{2021}).
\doiurl{10.1007/978-3-030-70956-3} .
\burl{https://doi.org/10.1007/978-3-030-70956-3}
\end{bbook}
\endbibitem

\bibitem[\protect\citeauthoryear{Hairer}{2010}]{Hairer2010}
\begin{barticle}
\bauthor{\bsnm{Hairer}, \binits{E.}}:
\batitle{Energy-preserving variant of collocation methods}.
\bjtitle{JNAIAM. J. Numer. Anal. Ind. Appl. Math.}
\bvolume{5}(\bissue{1-2}),
\bfpage{73}--\blpage{84}
(\byear{2010})
\end{barticle}
\endbibitem

\bibitem[\protect\citeauthoryear{Brugnano et~al.}{2010}]{BrugnanoIavernaroTrigiante2010}
\begin{barticle}
\bauthor{\bsnm{Brugnano}, \binits{L.}},
\bauthor{\bsnm{Iavernaro}, \binits{F.}},
\bauthor{\bsnm{Trigiante}, \binits{D.}}:
\batitle{Hamiltonian boundary value methods (energy preserving discrete line integral methods)}.
\bjtitle{JNAIAM. J. Numer. Anal. Ind. Appl. Math.}
\bvolume{5}(\bissue{1-2}),
\bfpage{17}--\blpage{37}
(\byear{2010})
\end{barticle}
\endbibitem

\bibitem[\protect\citeauthoryear{Miyatake}{2014}]{Miyatake2014}
\begin{barticle}
\bauthor{\bsnm{Miyatake}, \binits{Y.}}:
\batitle{An energy-preserving exponentially-fitted continuous stage {R}unge--{K}utta method for {H}amiltonian systems}.
\bjtitle{BIT}
\bvolume{54}(\bissue{3}),
\bfpage{777}--\blpage{799}
(\byear{2014})
\doiurl{10.1007/s10543-014-0474-4}
\end{barticle}
\endbibitem

\bibitem[\protect\citeauthoryear{Tang and Sun}{2012}]{TangSun2012}
\begin{barticle}
\bauthor{\bsnm{Tang}, \binits{W.}},
\bauthor{\bsnm{Sun}, \binits{Y.}}:
\batitle{Time finite element methods: a unified framework for numerical discretizations of {ODE}s}.
\bjtitle{Appl. Math. Comput.}
\bvolume{219}(\bissue{4}),
\bfpage{2158}--\blpage{2179}
(\byear{2012})
\doiurl{10.1016/j.amc.2012.08.062}
\end{barticle}
\endbibitem

\bibitem[\protect\citeauthoryear{Tang and Sun}{2014}]{TangSun2014}
\begin{barticle}
\bauthor{\bsnm{Tang}, \binits{W.}},
\bauthor{\bsnm{Sun}, \binits{Y.}}:
\batitle{Construction of {R}unge--{K}utta type methods for solving ordinary differential equations}.
\bjtitle{Appl. Math. Comput.}
\bvolume{234},
\bfpage{179}--\blpage{191}
(\byear{2014})
\doiurl{10.1016/j.amc.2014.02.042}
\end{barticle}
\endbibitem

\bibitem[\protect\citeauthoryear{Lasagni}{1988}]{Lasagni1988}
\begin{barticle}
\bauthor{\bsnm{Lasagni}, \binits{F.M.}}:
\batitle{Canonical {R}unge-{K}utta methods}.
\bjtitle{Z. Angew. Math. Phys.}
\bvolume{39}(\bissue{6}),
\bfpage{952}--\blpage{953}
(\byear{1988})
\doiurl{10.1007/BF00945133}
\end{barticle}
\endbibitem

\bibitem[\protect\citeauthoryear{Sanz-Serna}{1988}]{SanzSerna1988}
\begin{barticle}
\bauthor{\bsnm{Sanz-Serna}, \binits{J.M.}}:
\batitle{Runge-{K}utta schemes for {H}amiltonian systems}.
\bjtitle{BIT}
\bvolume{28}(\bissue{4}),
\bfpage{877}--\blpage{883}
(\byear{1988})
\doiurl{10.1007/BF01954907}
\end{barticle}
\endbibitem

\bibitem[\protect\citeauthoryear{Suris}{1988}]{Suris1988}
\begin{bchapter}
\bauthor{\bsnm{Suris}, \binits{Y.B.}}:
\bctitle{On the conservation of the symplectic structure in the numerical solution of {H}amiltonian systems (in russian)}.
In: \bbtitle{Numerical Solution of Ordinary Differential Equations},
pp. \bfpage{148}--\blpage{160}.
\bpublisher{Keldysh Institute of Applied Mathematics, USSR Academy of Sciences, Moscow}, 
(\byear{1988})
\end{bchapter}
\endbibitem

\bibitem[\protect\citeauthoryear{Hairer}{1994}]{Hairer1994}
\begin{botherref}
\oauthor{\bsnm{Hairer}, \binits{E.}}:
Backward analysis of numerical integrators and symplectic methods.
vol. 1,
pp. 107--132
(1994)
\end{botherref}
\endbibitem

\bibitem[\protect\citeauthoryear{Hairer et~al.}{2006}]{HairerLubichWanner2006}
\begin{bbook}
\bauthor{\bsnm{Hairer}, \binits{E.}},
\bauthor{\bsnm{Lubich}, \binits{C.}},
\bauthor{\bsnm{Wanner}, \binits{G.}}:
\bbtitle{Geometric Numerical Integration},
\bedition{2}nd edn.
\bsertitle{Springer Series in Computational Mathematics},
vol. \bseriesno{31},
p. \bfpage{644}.
\bpublisher{Springer}, 
(\byear{2006})
\end{bbook}
\endbibitem

\end{thebibliography}

\end{document}